\newcommand{\beq}{\begin{equation}}
\newcommand{\eeq}{\end{equation}}
\newcommand{\bea}{\begin{eqnarray}}
\newcommand{\eea}{\end{eqnarray}}
\newcommand{\ba}{\begin{array}}
\newcommand{\ea}{\end{array}}
\newtheorem{thrm}{Theorem}[section]
\newtheorem{defn}{Definition}[section]
\newtheorem{example}{Example}
\begin{document}

\title{Tchebychev Polynomial Approximations for $m^{th}$ Order Boundary Value Problems}
\author{ Samir Kumar Bhowmik\\
 Department of Mathematics and Statistics,  College of Science\\ Al Imam Mohammad Ibn Saud Islamic University(IMSIU)\\
 P.O. BOX 90950, 11623 Riyadh, Kingdom of Saudi Arabia,  \\ e-mail: bhowmiksk@gmail.com
}
\maketitle
%{\bf ABSTRACT}
%
\begin{abstract}
Higher order boundary value problems (BVPs) play an important role modeling various scientific and engineering problems.
In this article we develop an efficient numerical scheme for linear $m^{th}$ order BVPs. First we convert the higher order BVP to a first order BVP. Then we use Tchebychev orthogonal polynomials to approximate the solution of the BVP as a weighted  sum of polynomials. We collocate at Tchebychev clustered grid points to generate a system of equations to approximate the weights for the polynomials. The excellency of the numerical scheme is illustrated through some examples.
\end{abstract}
\textbf{}\\
\textbf{Keywords:} Tchebychev polynomial;  spectral collocation method; boundary value problem; numerical approximation.
%
%\tableofcontents
\section{Introduction}
Boundary value problems are an interesting topic in the field of applied mathematics, physics and engineering~\cite{Ahmet2008, R.P.Agarwa, Taha2012, DohaBS11, El-GamelCZ04, ShepleyLRoss}. Higher order BVPs arise in modeling various physical, chemical and biological realities. As for example, the  Schrödinger equation can be presented by a second order BVP~\cite{WeidemanR2000};  the free vibration analysis of beam structures is governed by a $4^{th}$ order differential equation; and that of ring structures by  a $6^{th}$ order differential equation; an ordinary convection yields a $10^{th}$ order BVP; an ordinary overstability yields a $12^{th}$ order BVP~\cite{Y.Wanga2006}.
Also an historically important example of a $4^{th}$ order BVP is the Orr Sommerfeld equation from the field of hydrodynamic stability~\cite{Trefethen}.  Numerical approximations for these problems are challenging because of the higher order derivatives and boundary conditions involving higher order derivatives of the unknown function.

In this study we consider the $m^{th}$ order non-homogeneous linear differential equation
\begin{equation}\label{f:eq001}
   y^{(m)}(t) + a_1(t)y^{(m-1)}(t) + a_2(t)y^{(m-2)}(t)+\cdots+ a_{m-1}(t)y^{(1)}(t) + a_m(t) y(t)= f(t),\ t \in (a,\ b),
\end{equation}
with a set of $m$ boundary conditions. Here $a_i(t)$, $i=1,\ 2,\cdots, m$ and $f(t)$ are real valued functions.  Here we consider
\eqref{f:eq001} with two exemplary set of boundary conditions though the scheme is not limited to them.
\begin{enumerate}
\item
\begin{enumerate}
\item[i.] $y^{(2k)}(a)=\alpha_{2k}$, $k=0,\ 1,\ 2,\ \cdots, m/2-1$,
\item[ii.] $y^{(2k)}(b)=\beta_{2k}$, $k=0,\ 1,\ 2,\ \cdots, m/2-1$, $m$ even,
\end{enumerate}
\item
\begin{enumerate}
\item[i.] $y^{(2k)}(a)=\alpha_{2k}$, $k=0,\ 1,\ 2,\ \cdots, (m-1)/2$
\item[ii.] $y^{(2k)}(b)=\beta_{2k}$, $k=0,\ 1,\ 2,\ \cdots, (m-1)/2-1$, $m$ odd.
\end{enumerate}
\end{enumerate}
or
\begin{enumerate}
\item
\begin{enumerate}
\item[i.] $y^{(k)}(a)=\alpha_{k}$, $k=0,\ 1,\ 2,\ \cdots, m/2-1$,
\item[ii.] $y^{(k)}(b)=\beta_{k}$, $k=0,\ 1,\ 2,\ \cdots, m/2-1$, $m$ even,
\end{enumerate}
\item
\begin{enumerate}
\item[i.] $y^{(k)}(a)=\alpha_{k}$, $k=0,\ 1,\ 2,\ \cdots, (m-1)/2$
\item[ii.] $y^{(k)}(b)=\beta_{k}$, $k=0,\ 1,\ 2,\ \cdots, (m-1)/2-1$, $m$ odd.
\end{enumerate}
\end{enumerate}

The existence and uniqueness of analytic solutions  of the higher order BVP \eqref{f:eq001} have been well studied~\cite{R.P.Agarwa, Xinan2014, raisinghania2007} and many references therein.
The numerical approximation  on the solution of the $m^{th}$-order
BVP \eqref{f:eq001}  is sparse.
So we are not interested in discussing the  existence and uniqueness issues rather we focus on an efficient numerical scheme for \eqref{f:eq001}.

% Some problems are contained implicitly in the work of several authors.
 The authors concentrate on numerical methods for $4^{th}$ order BVPs in~\cite{Chawla1979, Twizell.1988}.  A low-order numerical method is outlined in \cite{Twizell.1988}. Taher et. al~\cite{Taher2013} study $4^{th}$ order Sturm-Liouville BVPs. They use Tchebychev differentiation matrices to approximate the model. They present some examples to demonstrate the scheme.

A detailed study about a $6^{th}$ order BVPs can be found in \cite{N.P.HALE.2006}.  They update various existing MATLAB codes to solve the $6^{th}$ order differential equation with various boundary conditions using finite difference based schemes. A good discussion about the existence and uniqueness of solutions of such BVPs can be found in this article.

A locally supported Lagrange polynomial approximation of higher order BVPs can be found in \cite{Y.Wanga2006}. They approximate a $6^{th}$ order differential equation and an $8^{th}$ order differential equation, respectively to illustrate the proposed method with two different set of boundary conditions.

Recently Adomian decomposition methods for $4^{th}$ order
BVPs are used by  \cite{Attili2006}. Shi and Cao \cite{ZCao2012} approximates a higher order
eigenvalue problem using Haar wavelets. Whereas \cite{U.Ycel2012} applies a differential quadrature method for a $4^{th}$ order BVP.

The famous and popular texts on higher order BVPs using spectral method are \cite{Trefethen, WeidemanR2000}. In both of the study the authors consider Tchebychev nodes to approximate the higher order derivatives in the MATLAB programming environment. They present numerical  solutions of some $2^{nd}$ and $4^{th}$  order BVPs and eigenvalue problems.
Both the authors use some differentiation matrices to approximate the higher order derivatives. Then they discuss issues regarding boundary conditions.  They discuss some difficulties of handling boundary conditions, specially boundary conditions related to the higher order derivatives. The limitation of the scheme is that  nonuniqueness of approximate solutions may occur~ \cite{WeidemanR2000} because of the boundary conditions.

From all the discussions above and the references therein we experience that the main difficulty of using these schemes is to implement/handle boundary conditions. Studying all these articles we notice that most techniques are developed focusing some particular boundary conditions. Most of the articles are restricted to $4^{th}$ order problems. It is also noticed that some articles discuss schemes for even order BVPs and some for odd order BVPs.  Thus a complete study about a general higher order BVPs  is missing. Also, in most cases, the authors compute Tchebychef differentiation matrices (TDM) to approximate all higher order derivatives. The implementation of the boundary conditions in  TDM  is complicated~\cite{Trefethen, WeidemanR2000}.  Here we aim to develop a scheme where we do not need to compute TDMs, and handling boundary conditions is easy.

Here we propose a  simple and efficient scheme for a general higher order BVP \eqref{f:eq001}. This scheme works well for both the even and the odd order differential equations with any type of boundary conditions. The efficient recipe is based on the Tchebychev collocation  method. In this scheme one can handle boundary conditions without any complication. Actually we convert the higher order BVP to a first order BVP.  We approximate the resulting first order system of differential equations using orthogonal polynomials defined in a bounded interval $[a,\ b]$. Here all boundary conditions involving higher order derivatives have been converted to boundary conditions for the respective transformed functions. Then we present  the unknown functions and the boundary values by weighted sums of truncated Tchebychev polynomials. As a result the scheme gives us a system of linear equations for the Tchebychev weights/coefficients which can be solved by using a standard linear algebra solver.
We do not have to compute Tchebychev differentiation matrices for higher order derivatives here.
Thus the scheme becomes simple and efficient. The proposed scheme preserves spectral accuracy which has many applicabilities in physical and engineering models. Here along with the solution of \eqref{f:eq001} the proposed scheme generates an approximation of the first $m-1$ derivatives of the unknown function which is important in the study of some physical realities.

The rest of the article is organised as follows:  In Section~\ref{sec02} we convert the higher order differential equation  \eqref{f:eq001} to a set of first order differential equations.  We discuss some basic results on Tchebychev polynomials and recurrence relations to support our study in Section~\ref{sec03}.  A simple scheme for the resulting system of first order differential equations has been developed in Section~\ref{sec04}. We finish with some numerical examples and discussion in Section~\ref{sec05}.

\section{Reduction of order for the BVP \eqref{f:eq001}}\label{sec02}
Any higher order differential equation can be converted into a system of first order differential  equations. Thus a higher order BVP can be converted into a first order system of differential equations with the same boundary conditions.
Here in this study we aim to develop a scheme to approximate higher order BVPs after converting them to first order system of differential equations.
To that end using the transformations
\begin{equation}\label{trnsfrm01:f}
y=y_1,\ \frac{dy}{dt} =y_2,\ \frac{d^2y}{dt^2} =y_3,\ \cdots,\ \frac{d^{m-1}y}{dt^{m-1}} =y_m,
\end{equation}
we rewrite the $m^{th}$ order linear BVPs \eqref{f:eq001} as a system of ordinary differential equations
\begin{equation}\label{f:eq002}
\begin{gathered}
\frac{dy_1}{dx} =y_2,\ \frac{dy_2}{dx} =y_3,\ \cdots, \ \frac{dy_{m-1}}{dx} =y_m \\
\frac{dy_m}{dt} = -\sum_{i=1}^{m-1}a_i y_{m-i} + f(t).
\end{gathered}
\end{equation}
The system of equations \eqref{f:eq002} can be written as
\begin{equation}\label{f:eq002aa}
\frac{d\underline y}{dt} = A \underline y +\underline f(t),\ \quad t\in (a,\ b),
\end{equation}
where
\[
A=\left(
  \begin{array}{ccccccc}
  0 & 1 &0 &0&\cdots&0&0\\
  0&0 &1 & 0&0&\cdots&0\\
  \vdots & \ddots&\ddots & \ddots&\ddots&\ddots&\vdots\\
  0&0 &0 &\cdots&0&1&0\\
  0&0&\cdots&0&0&0&1\\
  -a_m(t)& -a_{m-1}(t)& \cdots& \cdots & -a_2(t)& -a_1(t)& 0
  \end{array}
   \right),
\]
\[
\underline y(t)=\left(
  \begin{array}{c}
 y_1(t)\\
 y_2(t)\\
 \vdots\\
 y_m(t)
  \end{array}
   \right),
\quad
\underline f(t)=\left(
  \begin{array}{c}
 0\\
0\\
 \vdots\\
f(t)
  \end{array}
   \right).
\]
The boundary condition for the $k^{th}$ derivative of $y(t)$ is now transformed into the boundary condition of the unknown function $y_k(t)$ for all $k=0, 1,\ 2,\ \cdots, m-1$.
So in the similar fashion the transformed boundary conditions can be written as
\begin{equation}\label{f:eq002_bc_a}
\begin{gathered}
\text{for }\ m\ \text{even }\left\{
\begin{array}{l}
y_{2k+1}(a)=\alpha_{2k},\ k=0,\ 1,\ 2,\ \cdots, m/2-1,\\
y_{2k+1}(b)=\beta_{2k},\ k=0,\ 1,\ 2,\ \cdots, m/2-1, \\
\end{array}
\right.
%\end{gathered}
\\
%\begin{gathered}
\text{for }\ m\ \text{odd }\left\{
\begin{array}{l}
y_{2k+1}(a)=\alpha_{2k}, k=0,\ 1,\ 2,\ \cdots, (m-1)/2,\\
y_{2k+1}(b)=\beta_{2k}, k=0,\ 1,\ 2,\ \cdots, (m-1)/2-1,
\end{array}
\right.
\end{gathered}
\end{equation}
or
\begin{equation}\label{f:eq002_bc_b}
\begin{gathered}
\text{for }\ m\ \text{even }\left\{
\begin{array}{l}
y_{k+1}(a)=\alpha_{k},\ k=0,\ 1,\ 2,\ \cdots, m/2-1,\\
y_{k+1}(b)=\beta_{k}, k=0,\ 1,\ 2,\ \cdots, m/2-1,\\
\end{array}
\right.
\\
\text{for }\ m\ \text{odd }\left\{
\begin{array}{l}
y_{k+1}(a)=\alpha_{k}, k=0,\ 1,\ 2,\ \cdots, (m-1)/2,\\
y_{k+1}(b)=\beta_{k}, k=0,\ 1,\ 2,\ \cdots, (m-1)/2-1.\\
\end{array}
\right.
\end{gathered}
\end{equation}
From now on we consider the system of first order differential equations \eqref{f:eq002aa} with the set of transforms boundary conditions \eqref{f:eq002_bc_a} and \eqref{f:eq002_bc_b}.
\section{Tchebychev polynomial  approximation}\label{sec03}
Before moving onto our main study we discuss some basic properties to support our  scheme. In this article we are concerned with approximating solutions  of \eqref{f:eq002aa} using Tchebychev polynomials as trials functions and Tchebychev nodes for collocation. Polynomial interpolations based on Tchebychev nodes are often used to approximate smooth function. The pseudospectral methods perform well, in cases where both solutions and coefficients are not smooth~\cite{B.Forn, W.Gautschi} as well.
In general, Spectral methods \cite{Trefethen} are a class of spatial discretizations for differential equations. They can be categorised as Galerkin, tau and collocation(or pseudospectral) spectral methods. Galerkin and tau work with the coefficients of a global
expansion whereas pseudospectral work with the values at collocation points. There are two key components for the formulation of Spectral methods:
\begin{itemize}
  \item Trial function, which is also called the expansion or approximation functions.
  \item Test function, which is also known as weight functions.
\end{itemize}

Let $L_n u(t)$ be the $n^{th}$ degree polynomial approximation of $u(t)$. We state an upper bound for the solutions generated by pseudospectral approximation.
%The fundamental problem of approximation of any function $u(t)$ is to find conditions under which $L_n u(t)$ converges to $u(t)$ as $n$ goes $\infty.$
%
\begin{defn}
An approximation scheme converges to the function approximated if
\[
 \lim_{n\rightarrow\infty}\|L_n u(t)-u(t)\|=0
\]
 for all $t\in\Omega,$ where $\Omega$ is the domain for $u(t)$, and
\begin{equation}\label{apprerr01:f}
L_n u(t) = \sum_{j=0}^{n} a_{j}\phi_{j} (t),
\end{equation}
  where $\phi_j(t)$ are orthogonal polynomials defined in  $\Omega$.
\end{defn}
\begin{thrm}~\cite{J.Shohat}
Let $u(t)$ be finite at every point of the finite interval $(a, b)$ and
such that $\int_{a}^{b}u^2 (t)dt$ exists. Then
\[
  \lim_{n\rightarrow \infty} \int_{a}^{b} |u(t)-L_n(u)|^r d\psi=0,
\quad \mbox{$0<r\le 2,$}
\]
if  $\{\phi_n (t)\}$ represents any set of
orthogonal polynomials corresponding to $(a, b),$ and $\psi$ is a weight function.
\end{thrm}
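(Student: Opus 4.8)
The plan is to prove the statement first for the exponent $r=2$ and then to recover the full range $0<r\le 2$ by a finite-measure inequality. Throughout I read $L_n(u)$ as the Lagrange interpolation polynomial of $u$ at the $n$ zeros $x_1,\dots,x_n$ of the orthogonal polynomial $\phi_n$; the two hypotheses are then precisely what the interpolant needs. Requiring $u$ to be finite at every point of $(a,b)$ makes the nodal values $u(x_k)$ meaningful, and requiring the \emph{unweighted} integral $\int_a^b u^2\,dt$ to exist forces $u^2$ to be bounded, hence $u$ bounded and (being finite everywhere with discontinuity set of measure zero) Riemann integrable on $(a,b)$. The engine of the argument is the Gaussian quadrature attached to $\{\phi_n\}$: the nodes $x_k$ are real, simple and interior to $(a,b)$, the Christoffel numbers $\lambda_k>0$ are positive, the rule $\int_a^b P\,d\psi=\sum_{k=1}^n\lambda_k P(x_k)$ is exact for every polynomial $P$ of degree at most $2n-1$, and taking $P\equiv 1$ gives $\sum_{k=1}^n\lambda_k=\mu:=\int_a^b d\psi<\infty$ (finiteness of $\mu$ being part of what it means for $\{\phi_n\}$ to have finite moments on a bounded interval).

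For $r=2$, fix $\epsilon>0$. Since $(a,b)$ is finite and $d\psi$ a finite measure, the polynomials are dense in $L^2(d\psi)$, and because $u$ is bounded it lies in $L^2(d\psi)$; so I choose a polynomial $Q$ with $\|u-Q\|_{L^2(d\psi)}<\epsilon$ and set $d=\deg Q$. For every $n>d$ the interpolant reproduces $Q$, i.e. $L_nQ=Q$ (interpolation through $n$ nodes is exact on polynomials of degree $\le n-1$), so by linearity
\[
u-L_nu=(u-Q)-L_n(u-Q).
\]
Writing $g=u-Q$ and noting that $(L_ng)^2$ is a polynomial of degree at most $2n-2\le 2n-1$ with $(L_ng)(x_k)=g(x_k)$, exactness of the quadrature yields
\[
\|L_ng\|_{L^2(d\psi)}^2=\int_a^b (L_ng)^2\,d\psi=\sum_{k=1}^n\lambda_k\,g(x_k)^2 .
\]
The whole problem is thereby reduced to controlling the quadrature sum of $g^2$ along the Gaussian nodes.

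The heart of the matter — and the step I expect to be the main obstacle — is to show that this nodal sum does not detect the non-smoothness of $g$, namely $\sum_{k}\lambda_k g(x_k)^2\to\int_a^b g^2\,d\psi$. The integrand $g^2=(u-Q)^2$ is bounded and Riemann integrable (this is where the hypotheses on $u$ are used decisively), so I plan to establish convergence of Gaussian quadrature for bounded Riemann integrable integrands by a squeezing argument: given $\delta>0$, sandwich $g^2$ between continuous functions $\underline h\le g^2\le\overline h$ with $\int_a^b(\overline h-\underline h)\,d\psi<\delta$. Positivity of the Christoffel numbers then gives $\sum_k\lambda_k\underline h(x_k)\le\sum_k\lambda_k g(x_k)^2\le\sum_k\lambda_k\overline h(x_k)$, while the classical convergence theorem for positive interpolatory quadratures on continuous integrands (P\'olya--Stieltjes, via the Weierstrass theorem) forces $\sum_k\lambda_k\overline h(x_k)\to\int\overline h\,d\psi$ and likewise for $\underline h$. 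Letting $n\to\infty$ and then $\delta\to0$ gives $\sum_k\lambda_k g(x_k)^2\to\int_a^b g^2\,d\psi=\|g\|_{L^2(d\psi)}^2<\epsilon^2$, so $\limsup_n\|L_ng\|_{L^2(d\psi)}\le\epsilon$. Combining this with the triangle inequality applied to the displayed decomposition gives $\limsup_n\|u-L_nu\|_{L^2(d\psi)}\le 2\epsilon$, and since $\epsilon>0$ is arbitrary the $r=2$ claim follows.

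Finally, the passage to $0<r\le 2$ is routine: since $\mu=\int_a^b d\psi<\infty$, Jensen's inequality applied to the probability measure $d\psi/\mu$ gives
\[
\Big(\int_a^b |u-L_nu|^r\,d\psi\Big)^{1/r}\le \mu^{\frac1r-\frac12}\Big(\int_a^b |u-L_nu|^2\,d\psi\Big)^{1/2},
\]
so the $L^r(d\psi)$ error is dominated by the $L^2(d\psi)$ error and tends to $0$ as well. The only genuinely delicate point in the scheme is the quadrature-convergence step of the preceding paragraph; everything else is bookkeeping with the exactness and positivity of Gaussian quadrature and the density of polynomials.
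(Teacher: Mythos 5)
Your proposal cannot be compared to the paper's argument in the usual sense, because the paper does not prove this theorem at all: its ``proof'' is the single sentence ``For exact details see~\cite{J.Shohat}'', i.e.\ a deferral to the cited reference. What you have written is a reconstruction of the classical Erd\H{o}s--Tur\'an/Shohat argument, and it is the right skeleton: read $L_n$ as Lagrange interpolation at the zeros of $\phi_n$; use exactness of the Gauss rule up to degree $2n-1$ and positivity of the Christoffel numbers to turn $\|L_n g\|_{L^2(d\psi)}^2$ into the nodal sum $\sum_k \lambda_k g(x_k)^2$; reduce to $g=u-Q$ with $Q$ a polynomial close to $u$ in $L^2(d\psi)$, using $L_nQ=Q$ for $n>\deg Q$; prove convergence of the positive quadrature rule on bounded Riemann-integrable integrands by squeezing between continuous functions; and pass from $r=2$ to $0<r\le 2$ by the finite-measure embedding. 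This is essentially the proof in the literature the paper points to, so you have recovered the cited argument rather than found an alternative to it; the added value of your write-up is that it makes visible exactly where each hypothesis and each property of Gaussian quadrature is used, which the paper leaves entirely implicit.

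Two steps are glossed and deserve repair. First, the inference ``$\int_a^b u^2\,dt$ exists, hence $u$ is bounded and Riemann integrable'' is not valid as stated: Riemann integrability of $u^2$ does not imply Riemann integrability of $u$ (take $u=1$ on the rationals and $u=-1$ on the irrationals, so that $u^2\equiv 1$). Your argument genuinely needs $u$ itself to be Riemann integrable, since otherwise the cross term in $g^2=(u-Q)^2=u^2-2uQ+Q^2$ is uncontrolled; this is really an imprecision inherited from the theorem's loose phrasing (Shohat's hypothesis is Riemann integrability of $u$), but your parenthetical ``with discontinuity set of measure zero'' assumes exactly what fails in the counterexample. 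Second, your sandwich step asks for continuous $\underline h\le g^2\le\overline h$ with $\int_a^b(\overline h-\underline h)\,d\psi<\delta$, whereas Riemann integrability only hands you smallness of $\int_a^b(\overline h-\underline h)\,dt$; transferring from $dt$ to $d\psi$ needs an absolute-continuity argument (the weight is in $L^1$, $g^2$ is bounded, and the exceptional set where $\overline h-\underline h$ is not small has small Lebesgue measure). This transfer is also where it matters that $\psi$ is a genuine weight function: if $d\psi$ had an atom at a point missed by all node sets where $u$ is discontinuous, the theorem itself would fail. Both repairs take only a few lines, and the core of your proof stands.
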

\begin{proof}
For exact details see~\cite{J.Shohat}.
\end{proof}

Now a days, a lot of attention has been grown to the study of the Tchebychev orthogonal polynomial approximation focusing various  real life models. The efficiency of the method is very important, and have been  well studied by several authors~\cite{Trefethen, WeidemanR2000}. The goal of this section is to recall some properties of the Tchebychev polynomials, state some known results, and derive useful formulas that are important for this study.

The Tchebychev points are unequally spaced points over $[-1,\ 1]$. These points are the horizontal coordinates of a unit circle with center $(0, 0)$.  It is to note that they are numbered from right to left. These points are the extreme points of the nth degree Tchebychev polynomial of the first kind. These points are defined by
\[
   t_i = \cos\left(\frac{i \pi}{n} \right),\  i=0,\ 1,\ 2,\cdots,\ n.
\]
Tchebychev polynomials can be defined over  $[-1,\ 1]$ and are obtained by expanding the following generating formula
\[
  T_r(t) = \cos(r \cos^{-1}(t)), \ r= 0,\ 1,\ 2,\cdots, \ -1\le t \le 1.
\]
This polynomials satisfy the following properties:
\begin{enumerate}
\item  $T_0(t)=1$, $T_1(t)=t$ and $T_r(t)=2tT_{r-1}(t)- T_{r-2}(t)$, $r\ge 2$.
\item
\[
\int_{-1}^1 \frac{T_r T_p}{\sqrt{1-t^2}} dt = \left\{
                       \begin{array}{cc}
                        0, & r\ne p\\
                        \frac{\pi}{2}, & r>0\\
                        \pi, & r=0.
                     \end{array}
                     \right.
\]
\end{enumerate}
 Any analytic function $u(t)$  can be approximated by a truncated Tchebychev series as
\[
 u(t) = \sum_{r=0}^n c_{r} T_r(t), -1\le t\le 1,
\]
which can be written as
\[
  u(t) = T(t) c,
\]
where $$T(t) = [T_0(t),\ T_1(t),\cdots, T_n(t)],$$ and $$c = [c_0,\ c_1,\cdots, c_n]'.$$
Here, following the above convergence result we see that if the function $u$ belongs to $C^{\infty}$ class, the produced error of
approximation as $n$ tends to infinity, approaches zero with exponential rate ($\mathcal{O}(e^{-c^+ n})$, for some $c^+ > 0$).
This phenomenon is usually referred to as ``spectral accuracy''.

Assuming the function is differentiable the derivatives of $u(t)$ can be written as~\cite{Msezer}
\begin{equation}\label{f:eq0054a}
 u^{(k)}(t) = \sum_{j=0}^n c_{j}^{(k)}T_r(t), -1\le t\le 1, k \ge 0.
\end{equation}

It is well known that the coefficients $c_{r}^{(k)}$ and $c_{r}^{(k+1)}$ of $u^{(k)}(t)$ and $u^{(k+1)}(t)$ can be written by  the following three term recurrence relation
\[
 2rc_{r}^{(k)} = c_{r-1}^{(k+1)}- c_{r+1}^{(k+1)},\ r\ge 1, %\ i=1,\ 2,\ 3, \cdots, M,
\]
can be simplified into the following relation~\cite{Msezer}
\[
c_{r}^{(k+1)} = 2 \sum_{j=0}^\infty (r+2j+1)c_{r+2i+1}^{(k)},
\]
where $c_{r}^{(0)} = c_{r}$.
Accordingly,
the truncated system $c^{(k)}$ can be computed as
\begin{equation}\label{f:eq0054b}
 c^{(k)}  = 2^k \mathcal{M}^k c,
\end{equation}
where
\[ \text{when} ~ n ~\text{is}~ \text{odd},~~~
\mathcal{M}  = \left(
                \begin{array}{ccccccc}
                 0 & \frac{1}{2} & 0 & \frac{3}{2}& 0 &\cdots & \frac{n}{2}\\
                 0 & 0 & 2& 0& 4&0 \cdots & 0\\
                 0 & 0 & 0& 3 & 0 & 5\cdots & n\\
                 \vdots&\vdots&\vdots&\vdots &&&\vdots\\
                 0& 0 & 0 & & \cdots && n\\
                 0 & 0 & 0 & 0 & 0 \cdots&0 & 0
                \end{array}
                \right),
 \]
 and
 \[ \text{when}~ n ~\text{is}~ \text{even},~~~
\mathcal{M}  = \left(
                \begin{array}{ccccccc}
                 0 & \frac{1}{2} & 0 & \frac{3}{2}& 0 &\cdots & 0\\
                 0 & 0 & 2& 0& 4&0 \cdots & n\\
                 0 & 0 & 0& 3 & 0 & 5\cdots & 0\\
                 0& 0 & 0 & & \cdots && n\\
                 0 & 0 & 0 & 0 & 0 \cdots&0 & 0
                \end{array}
                \right).
 \]
 \section*{Tchebychev polynomials on $[a, b]$}
Now the Tchebychev polynomials can be defined over an interval $[a,\ b]$ by
\begin{equation}\label{f:chebyab}
  T_r^*(t) = \cos(r \cos^{-1}\left(\frac{2}{b-a}\left(t-\frac{a+b}{2}\right)\right), \ r= 0,\ 1,\ 2,\cdots, \ a\le t \le b.
\end{equation}
The collocation points in $[a,\ b]$ can be defined by
\begin{equation}\label{nodes001:f}
 t_j = \frac{a-b}{2}\left(\frac{a+b}{b-a} + \cos \left(\frac{j\pi}{n} \right)  \right),\ j=0,\ 1,\ 2,\cdots, n.
\end{equation}
Also using the transformed Tchebychev polynomials a unknown function $v^{*}(t)$  and the derivatives  can be approximated by
\[
 v^{*(k)}(t) = \sum_{j=0}^n C_{j}^{*(k)} T_j^*(t),\ t \in [a,\ b],\ k=0,\ 1,\ 2, \cdots,
\]
where
\begin{equation}\label{f:eq0054c}
   C^{*(k)} = \left(\frac{4}{b-a}\right)^k \mathcal{M}^k C^*,
\end{equation}
%$$ T^*(t)  = \left[T^*_0(t)\ T^*_1(t)\ \cdots\ T^*_n(t)\right]', $$
 $$ C^* = [C_{0}^*\ C_{1}^*\ C_{2}^* \cdots C_{n}^*]'.$$%~;~i = 1,\ 2, \cdots, m,$$

\section{Polynomial approximation of the first order system of BVPs}\label{sec04}
In this section we motivate ourselves to solve the  system of differential equations \eqref{f:eq002aa} with a given set of  boundary conditions.
To that end we recall \eqref{f:eq002aa} with solutions as a truncated series of Tchebychef polynomials given by
 \begin{equation}\label{f:system001a}
   y_i(t) = \sum_{j=0}^n C_{i,j} T^*_j(t), a \le t \le b,\ i=1,\ 2,\ 3,\cdots, m
 \end{equation}
 which  can be expressed as
$$ y_i(t) =  T^*(t) C_i  ,$$ where $$ T^*(t)  = \left[T^*_0(t)\ T^*_1(t)\ \cdots\ T^*_n(t)\right], $$
and $$ C_i = [C_{i, 0}\ C_{i, 1}\ C_{i, 2} \cdots C_{i, n}]'~;~i = 1,\ 2, \cdots, m,$$ and $T^*_j(t)$ are given by \eqref{f:chebyab} in $[a\ b]$.  It is easy to see that $y_i(t)$ approximates  $y^{(i-1)}(t)$ for $i=1,\ 2,\ 3,\cdots, m$.
 Here we aim to compute $C_{i, j}$ values so that \eqref{f:system001a} approximates \eqref{f:eq002aa} satisfying all the boundary conditions.
 We write the derivatives of the unknown function $y_i(t)$ as
\begin{equation}\label{f:fff}
     y_i^{(k)}(t) = T^*(t) C_i^{(k)}.
\end{equation}
Combining \eqref{f:system001a}, \eqref{f:fff} and \eqref{f:eq0054c} the derivatives involved in \eqref{f:eq002aa}  can be expressed as
 \[
   y_i^{(k)}(t) = \left(\frac{4}{b-a}\right)^k T^*(t)\mathcal{M}^k C_i,\  k=0,\ 1.
 \]
Thus $\underline y(t) $ and all the derivatives in \eqref{f:eq002aa} can be approximated by
 \[
   \underline y^{(k)}(t) = \left(\frac{4}{b-a}\right)^k {  \Psi(t)} \mathcal{\mathbf{M}}^k C,\ k=0,\ 1,
 \]
% { \color{magenta} is this $T(t)$ or it should be $\Psi(t)$ }
%where
where
 \[
   \Psi(t)  = \left(
   \begin{array}{ccccc}
   T^*(t) &&&&\\
   &T^*(t) &&&\\
   &&\ddots &&\\
   &&&& T^*(t)
   \end{array}
   \right)_{m \times m},
 \]

 \[
   C = \left(
   \begin{array}{c}
   C_1 \\
   C_2 \\
   \vdots \\
    C_{m}
   \end{array}
   \right)_{m\times 1},
 \]
 and
 \[
  \mathcal{\mathbf{M}}^k  =
   \left(
   \begin{array}{ccccc}
   \mathcal{M}^k &&&&\\
   &\mathcal{M}^k &&&\\
   &&\ddots &&\\
   &&&& \mathcal{M}^k
   \end{array}
   \right)_{m\times m}.
  %diag\left( \mathcal{M}^k\ \mathcal{M}^k\cdots \mathcal{M}^k \right), \mbox{a $R(N+1)\times R(N+1)$}.
 \]

Using the above notations the system of ODEs \eqref{f:eq002aa} gives us a system of algebraic equations for the Tchebychef weights $C$ as
 \begin{equation}\label{flappr02:f}
U^{(1)}(t)-A U^{(0)}(t) = \underline f(t),
 \end{equation}
 where
 \[
   U^{(0)}(t) = \Psi(t) C,\
 \]
 and
 \[
    U^{(1)}(t) = \frac{4}{b-a} \Psi(t)\mathcal{\mathbf{M}} C.
 \]
To solve the system for $C_{i, j}$ we collocate \eqref{flappr02:f} at translated Tchebychev nodes \eqref{nodes001:f}.
Collocating at the prescribed grid points  \eqref{flappr02:f} yields
 \begin{equation}\label{f:full_discrete_01}
  P_1 \underline U^{(1)} -  P_0 \underline U^{(0)} = \underline F,
 \end{equation}
 where
 \[
  \underline U^{(i)},\ i=0,\ 1, \ \mbox{are $ m(n+1)$ column vectors,}
  %=  [u^{(i)}(t_0) \ u^{(i)}(t_1)\cdots \ u^{(i)}(t_N)]',\ \mbox{a $R(N+1)$ vector,}
 \]
 and
 \[
  \underline F =  [\underline f(t_0) \ \underline f(t_1)\cdots \ \underline f(t_n)]', \ \mbox{a $m(n+1)$ column vector,}
 \]
 \[
 P_1=\left(
     \begin{array}{ccccc}
    I_{m \times m} &&&&\\
    & I_{m\times m}&&&\\
     && \ddots && \\
    &&&& I_{m \times m}
    \end{array}
 \right),
 \]
 $\mbox{a $m(n+1) \times m(n+1)$ matrix}$, and
 \[
 P_0=\left(
    \begin{array}{ccccc}
    A_{m\times m} &&&&\\
    & A_{m\times m}&&&\\
    && \ddots&&\\
    &&&& A_{m\times m}
    \end{array}
 \right),
 \]
 $\mbox{a $m(n+1) \times m(n+1)$ matrix}.$
 Thus \eqref{f:full_discrete_01} gives a full discrete system of equation of the form
 \begin{equation}\label{fLfulldisc001}
 \mathcal{W}\underline C = \underline F.
 \end{equation}
 \eqref{fLfulldisc001} needs to be solved for $m(n+1)$ unknowns $C$ after imposing boundary conditions.

\begin{thrm}
The matrix $\mathcal{W}$ is invertible if at least one $a_j(t)\ne 0$, $j=1, 2, \cdots, m$.
\end{thrm}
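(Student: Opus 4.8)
The plan is to prove that the square $m(n+1)\times m(n+1)$ matrix $\mathcal{W}$ has trivial kernel, which for a square matrix is equivalent to invertibility. First I would unpack what $\mathcal{W}\underline C=0$ means. By the construction leading to \eqref{f:full_discrete_01}, this is exactly the assertion that the polynomials $y_i(t)=T^*(t)C_i$ of degree at most $n$ satisfy the \emph{homogeneous} collocated system $U^{(1)}(t_j)-A(t_j)U^{(0)}(t_j)=0$ at every Tchebychev node $t_j$, $j=0,1,\dots,n$. Reading off the companion structure of $A$, the first $m-1$ block rows say $y_i'(t_j)=y_{i+1}(t_j)$, while the last says $y_m'(t_j)+\sum_{i=1}^{m}a_i(t_j)\,y_{m-i+1}(t_j)=0$.

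The second step exploits the nonsingularity of the Tchebychev--Vandermonde matrix $V=\big(T^*_r(t_j)\big)_{j,r=0}^{n}$, which is invertible because the $n+1$ nodes \eqref{nodes001:f} are distinct and $\{T_0^*,\dots,T_n^*\}$ is a basis of the degree-$\le n$ polynomials. For each $i\le m-1$ the polynomial $y_i'-y_{i+1}$ has degree at most $n$ yet vanishes at the $n+1$ nodes, hence is identically zero; equivalently, with $D:=\frac{4}{b-a}\mathcal{M}$, one gets $C_{i+1}=D\,C_i$ and therefore $C_k=D^{\,k-1}C_1$ for every $k$. Thus the whole kernel is parametrised by $C_1$ alone, and the last block row collapses to the scalar requirement that $y_1$ be annihilated at every node by the $m^{th}$ order operator $L[y]:=y^{(m)}+\sum_{i=1}^{m}a_i\,y^{(m-i)}$. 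In coefficient form this is the $(n+1)\times(n+1)$ system $\mathcal{K}C_1=0$ with $\mathcal{K}=V D^{m}+\sum_{i=1}^{m}\operatorname{diag}\!\big(a_i(t_0),\dots,a_i(t_n)\big)\,V D^{\,m-i}$, so invertibility of $\mathcal{W}$ reduces to invertibility of $\mathcal{K}$.

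It remains to show $\mathcal{K}$ is nonsingular, and this is where the hypothesis must enter and where the real work lies. When the $a_i$ are constant the argument is clean: if the zeroth-order coefficient $a_m$ is nonzero then $L[p]$ is again a polynomial with $\deg L[p]=\deg p$, so a nonzero $p$ of degree $\le n$ with $L[p]=0$ at $n+1>\deg L[p]$ nodes would force $L[p]\equiv0$, and matching the top-degree coefficient gives $a_m\cdot(\text{leading coefficient of }p)=0$, hence $p\equiv0$. This also shows why a hypothesis on the $a_i$ is indispensable: in the pure-shift case $a_1=\dots=a_m=0$ the nonzero constants already lie in the kernel, so $\mathcal{W}$ is singular. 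I expect the main obstacle to be the genuinely variable-coefficient case, in which $L[p]$ need not be a polynomial and the ``vanishes at $n+1$ points $\Rightarrow$ identically zero'' shortcut is unavailable. My plan there is to factor $V$ out on the left and study $\mathcal{K}=V\big(D^{m}+\sum_{i=1}^{m}\widetilde A_i D^{\,m-i}\big)$ with $\widetilde A_i:=V^{-1}\operatorname{diag}(a_i(t_j))V$, isolating the single term $\widetilde A_m$ that carries no positive power of the nilpotent matrix $D$; showing that this term forbids a kernel is the crux, and it strongly suggests that the sharp hypothesis is $a_m(t)\neq0$ (the coefficient of $y$ itself) rather than merely that some $a_j$ be nonzero.
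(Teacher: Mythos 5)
Your kernel argument is the right way to attack this, and the reduction is correct as far as it goes: since the $n+1$ nodes are distinct, a degree-$\le n$ polynomial vanishing at all of them is zero, so the first $m-1$ collocated block rows force $C_{k}=D^{k-1}C_1$ exactly, and invertibility of $\mathcal{W}$ collapses to invertibility of the single collocated operator $\mathcal{K}$ acting on $C_1$. Your constant-coefficient closing argument (degree preservation when $a_m\ne0$, then compare leading coefficients) is complete. But the gap you flag at the end is genuine and, in fact, unfillable: the statement as printed is false. Take $a_m\equiv 0$ and $a_1\equiv 1$, so that ``at least one $a_j\ne 0$'' holds; then $y_1\equiv 1$, $y_2=\dots=y_m=0$ satisfies the homogeneous collocated system at every node, so $\mathcal{W}$ has nontrivial kernel. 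Your suspicion that the sharp hypothesis is $a_m\ne0$ is therefore correct, and even that does not suffice for genuinely variable coefficients: for $m=1$ and $n$ odd, choose $p(t)=t^2+c$ with $p(t_j)\ne0$ at all nodes and set $a_1(t_j)=-p'(t_j)/p(t_j)\ne0$; then $p$ lies in the kernel of $\mathcal{K}$ although $a_1$ is nonzero at every node. So no hypothesis of the form ``some coefficient is nonvanishing'' rescues the claim; one needs something like constant coefficients with $a_m\ne0$, or an additional well-posedness condition.

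For comparison, the paper's own proof takes an entirely different and invalid route: it splits $\mathcal{W}=\mathcal{A}+\mathcal{B}$ with $\mathcal{A}=P_0\Psi$ and $\mathcal{B}=\frac{4}{b-a}P_1\Psi\mathcal{M}$, asserts that $\mathcal{A}$ is invertible when some $a_j\ne0$, and concludes via $\det(A+B)=\det(A)+\det(B)$. The determinant identity is false for matrices of size greater than one, and the invertibility claim for $\mathcal{A}$ is also wrong, since the companion block satisfies $\det A(t)=\pm a_m(t)$, so $P_0$ is singular as soon as $a_m$ vanishes at one node. Your proposal, though incomplete, is the sounder of the two; the honest resolution is to restate the proposition (e.g., constant coefficients with $a_m\ne 0$, where your degree argument finishes it) rather than to try to complete the variable-coefficient step.
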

\begin{proof}
Here $\mathcal{W}$ is defined from \eqref{fLfulldisc001} which is a sum of two matrices
$$\mathcal{A} = P_0 \Psi\ \text{ and }\   \mathcal{B} = \frac{4}{b-a}P_1 \Psi \mathcal{\mathbf{M}}.$$
Now $\mathcal{A}$  is an invertible matrix  if at least one $a_j(t)\ne 0$ $j=1, 2, \cdots, m$, where
$$\Psi = diag(\Psi(t_0),\ \Psi(t_1),\ \cdots,\ \Psi(t_n)).$$  So the matrix $\mathcal{W}$ is nonsingular
 even if $B$ is singular, since  $$\det(A+B)=\det(A) + \det(B).$$
\end{proof}

Now we discuss boundary conditions. We have $m$ boundary conditions. Depending on the physical conditions modelled the $k^{th}$ left and right boundary conditions \eqref{f:eq002_bc_a} or \eqref{f:eq002_bc_b} (or BCs of any type) can be presented by
 \[
    \sum_{j=0}^n C_{k,j} T_j^*(t_n)=\alpha_k,\ \text{and } \ \sum_{j=0}^n C_{k,j} T_j^*(t_0)=\beta_k,
 \]
which can be written in vector form as
 \begin{equation}\label{f:syseq02}
     T^*(t_n) C_{k} = \alpha_k,\ \text{and } \ T^*(t_0) C_{k} = \beta_k.
 \end{equation}
So we need to solve the system of equations \eqref{fLfulldisc001} and \eqref{f:syseq02} for $C_{i, j}$. Now \eqref{fLfulldisc001} and \eqref{f:syseq02} involve the Tchebychev  weights $C_{i, j}$ for the unknown functions $y_i$ at $t=t_0$ and $t=t_n$.
We need to apply  boundary conditions  \eqref{f:syseq02} in \eqref{fLfulldisc001} before we attempt to solve them for $C_{i, j}$.  Here we combine \eqref{fLfulldisc001} and \eqref{f:syseq02} to solve the system for the weights.
 %Here for better understanding we recall the arrangement of the matrix $\mathcal{W}$ which is a $m(n+1)\times m(n+1)$.  Here
\begin{description}
\item[Right Boundary Conditions]  The first $m$ rows of $\mathcal{W}$, $F$ and $\Psi$  have been computed at $t=t_{0}$.  Let us assign them as submatrices $W_1$, $F_1$, and  $\Psi_1$.
Now the $k^{th}$ row of $W_1$, and $F_1$ are assigned for the unknown function $y_k(t)$,  $k=1,\ 2,\ \cdots, m$ at $t=t_0$.  Thus we replace the $k^{th}$ row of $W_1$ by the $k^{th}$ row of $\Psi_1$ ($W_1(k,:) = \Psi_1(k, :)$), and respective $k^{th}$ element of $F_1$ by the boundary value $\beta_k$ ($F_1(k,1)=\beta_k$) for all boundary conditions at $t_0=b$.

 \item[Left Boundary Conditions] The last $m$ rows of $\mathcal{W}$, $F$ and $\Psi$  have been computed at $t=t_{n}$.  Let us assign them as submatrices $W_n$, $F_n$, and  $\Psi_n$.
Now the $k^{th}$ row of $W_n$, and $F_n$ are assigned for the unknown function $y_k(t)$,  $k=1,\ 2,\ \cdots, m$ at $t=t_n$.  Thus we replace the $k^{th}$ row of $W_n$ by the $k^{th}$ row of $\Psi_n$  ($W_n(k,:) = \Psi_n(k,:)$), and respective $k^{th}$ element of $F_n$ by the boundary value $\alpha_k$ ($F_n(k,1)=\alpha_k$) for all boundary conditions at $t_n=a$.
 \end{description}
 %$$
 %  \Psi(t_0) \underline C =\underline G_1,\  \text{and}\ \Psi(t_N) \underline C =\underline G_2.
 %$$
 Thus replacing  the first $m$ rows of  $\mathcal{W}$ by the rearranged $W_1$, the last $m$ rows of  $\mathcal{W}$ by the rearranged $W_n$, first $m$ elements of $F$ by the rearranged $F_1$, and the last $m$ elements of $F$ by the rearranged $F_n$ we get the system of equations of the form
\begin{equation}\label{fLfulldisc002}
 \tilde{\mathcal{W}} \underline C = \underline {\tilde F},
 \end{equation}
 where
 \[
 \tilde{\mathcal{W}} = \left[
                       \begin{array}{c}
                       W_1\\
                       \mathcal{W}((m+1):mn, 1:m(n+1))\\
                       W_n
                       \end{array}
                      \right], \quad
                      \text{and }\quad
 \underline{F} = \left[
                       \begin{array}{c}
                       F_1\\
                       \mathcal{W}((m+1):mn, 1)\\
                       F_n
                       \end{array}
                      \right].
 \]
 The $m(n+1)\times m(n+1)$ system of equations \eqref{fLfulldisc002} can be solved for $C$ by using any standard linear system solver.
 Here
 \begin{itemize}
 \item $C_{1, j}$, $j=0,\ 1,\ 2, \cdots, n$  are the Tchebychev weights for  the solution $y(t)$ of  \eqref{f:eq001},
\item  $C_{2, j}$, $j=0,\ 1,\ 2, \cdots, n$  are the Tchebychev weights for $y'(t)$ of  \eqref{f:eq001},
\item $C_{3, j}$, $j=0,\ 1,\ 2, \cdots, n$  are the Tchebychev weights for $y''(t)$ of  \eqref{f:eq001}, \\
    $\vdots$
\item $C_{m, j}$, $j=0,\ 1,\ 2, \cdots, n$  are the Tchebychev weights for $y^{(m-1)}(t)$ of  \eqref{f:eq001}.
\end{itemize}
     Thus we get a simple algorithm to solve the higher order linear differential equation \eqref{f:eq001} with any set of boundary conditions.  In the next section we inspect some examples to show the efficiency of the scheme.

\section{Numerical experiments and discussions}\label{sec05}
In this section we  present numerical solutions of some higher order BVPs  using the method outlined in the previous section.  First define the set of clustered nodes \eqref{nodes001:f}. We use the transformation  \eqref{trnsfrm01:f} to convert the higher order BVPs to a system of first order BVPs.  Here we consider some examples (BVPs) with exact solutions so that we can present accuracy of our scheme umerically.
\begin{example}
We consider the following BVP
\[
y^{(2)}+y=1, \  y(0)=0,\  y(1) = 1.
\]
The exact solution is  $$y(t) = 1- \cos t + \cot 1  \sin t.$$ The following Figure~\ref{test_01} demonstrates the numerical solution of the $2^{nd}$ order BVP. Here from the error computation we observe that the accuracy of the scheme is of
an exponential order and we need $8$ nodes for a $10$ digit accurate solution.
\begin{figure}[ht!]
     \begin{center}
       \includegraphics[width=0.49\textwidth,height=6.5cm]{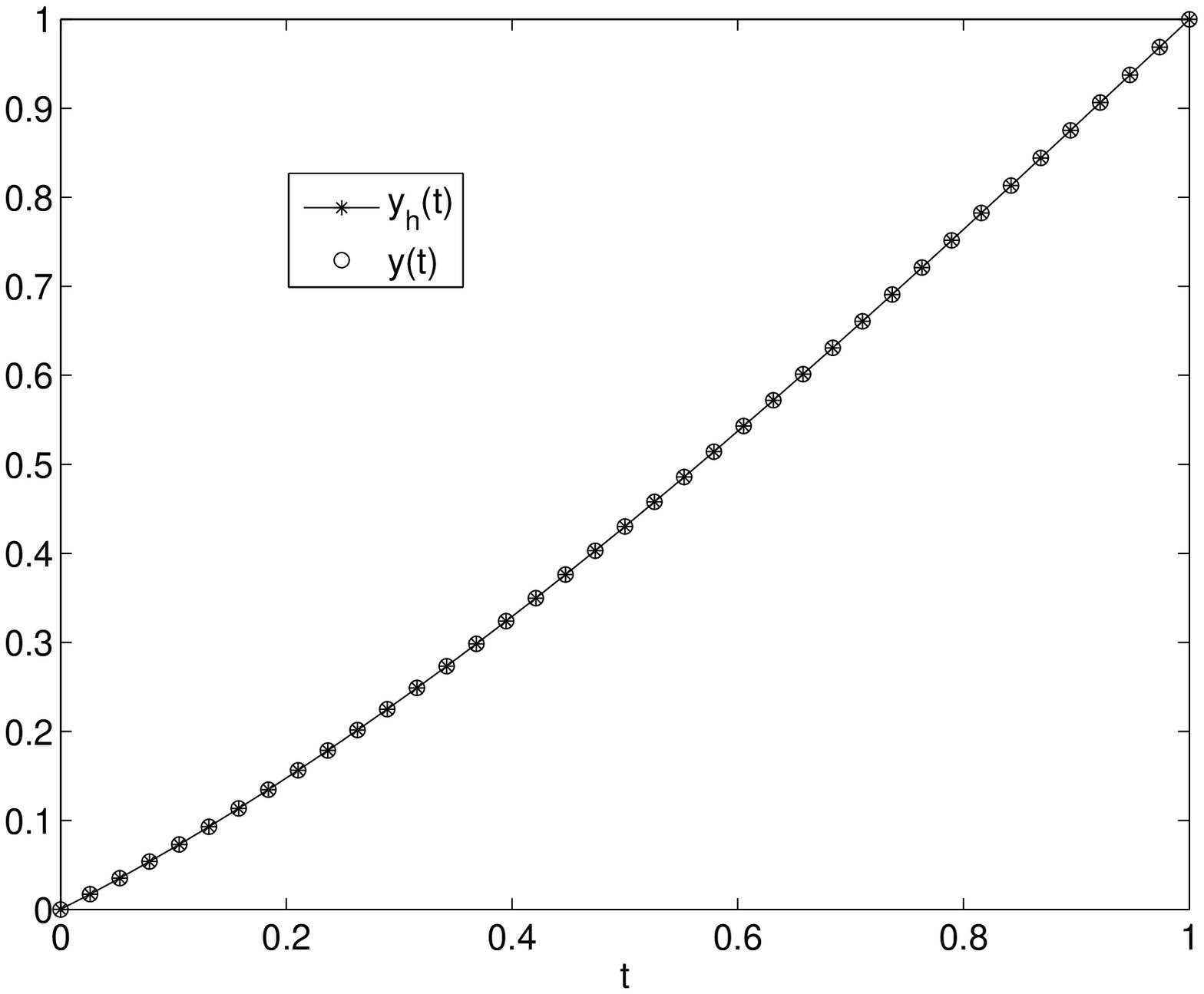}
       \includegraphics[width=0.49\textwidth,height=6.5cm]{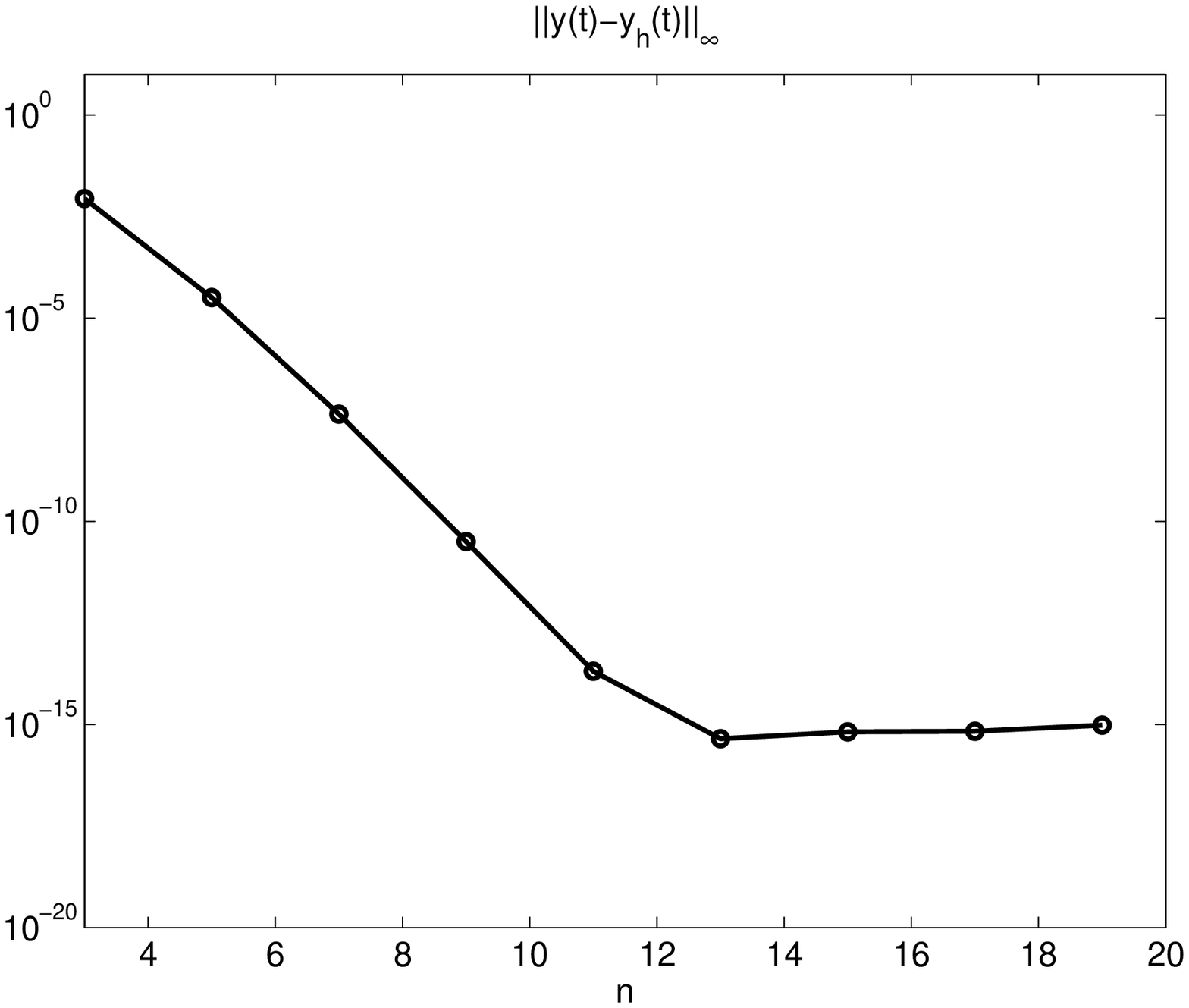}
     \end{center}
\caption{ Solution of the second order BVP. In the left figure we plot the exact and the approximate solutions. In the right figure we plot the computational error.}
    \label{test_01}
    \end{figure}
\end{example}
\begin{example}
We consider the following BVP
\[
y^{(4)} + 2y^{(2)} + y=1, \  y(0)=0,\ y'(0)=0, \  y(1) = 0,\ y'(0)=0.
\]
The exact solution is
\[
y(x) = \frac{1}{1 + sin(1)}(1 - t \cos(1 - t) - \cos(t) + t\cos(t) + sin(1)
       - \sin(1 - t) - \sin(t)).
\]
The following Figure~\ref{test_02} demonstrates the numerical solution of the $4^{th}$ order BVP. Here from the error computation we observe that the accuracy of the scheme is of
an exponential order. Here we notice that we need $9$ nodes for a $10$ digit accurate solution.
\begin{figure}[ht!]
     \begin{center}
       \includegraphics[width=0.49\textwidth,height=6.5cm]{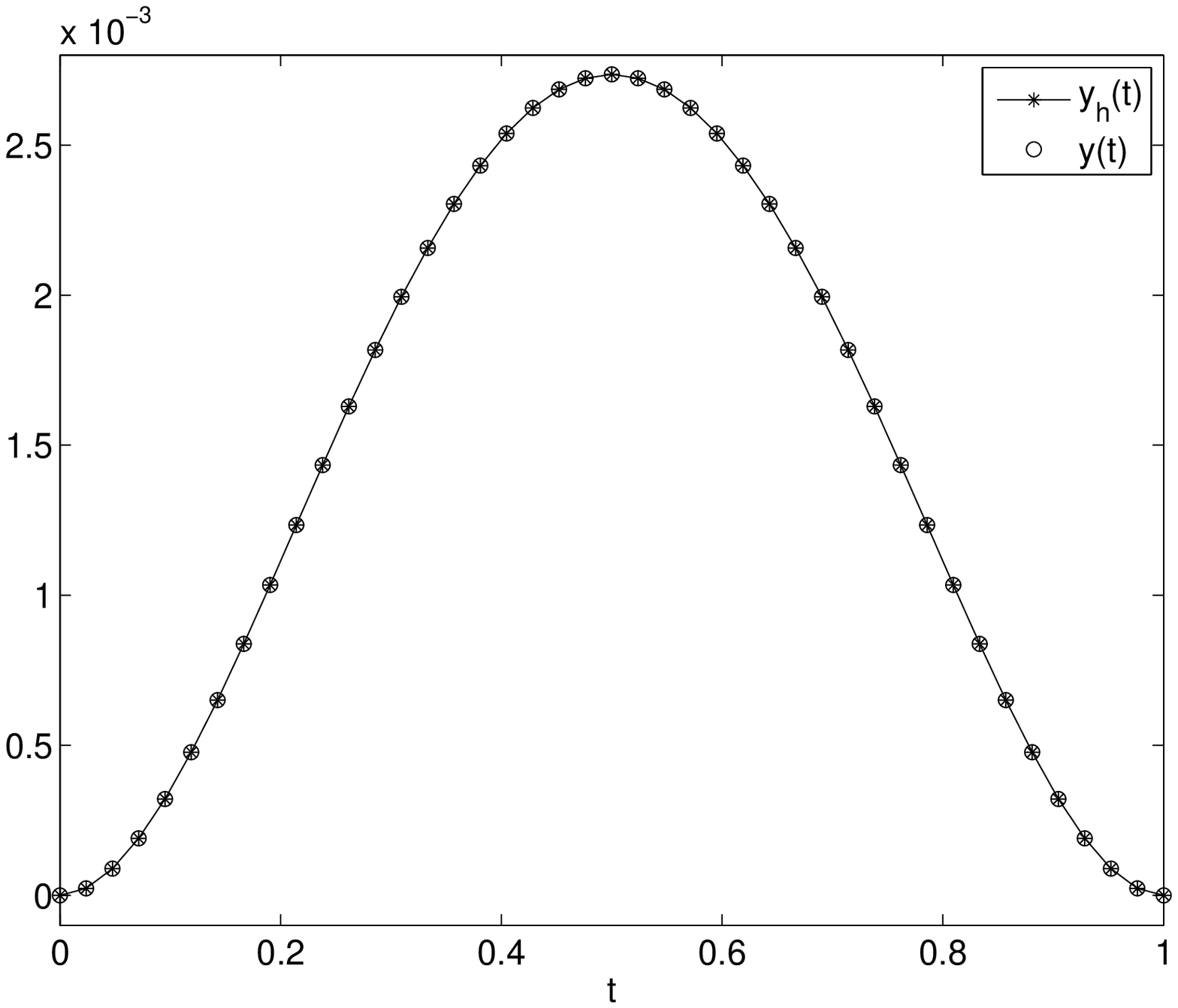}
       \includegraphics[width=0.49\textwidth,height=6.5cm]{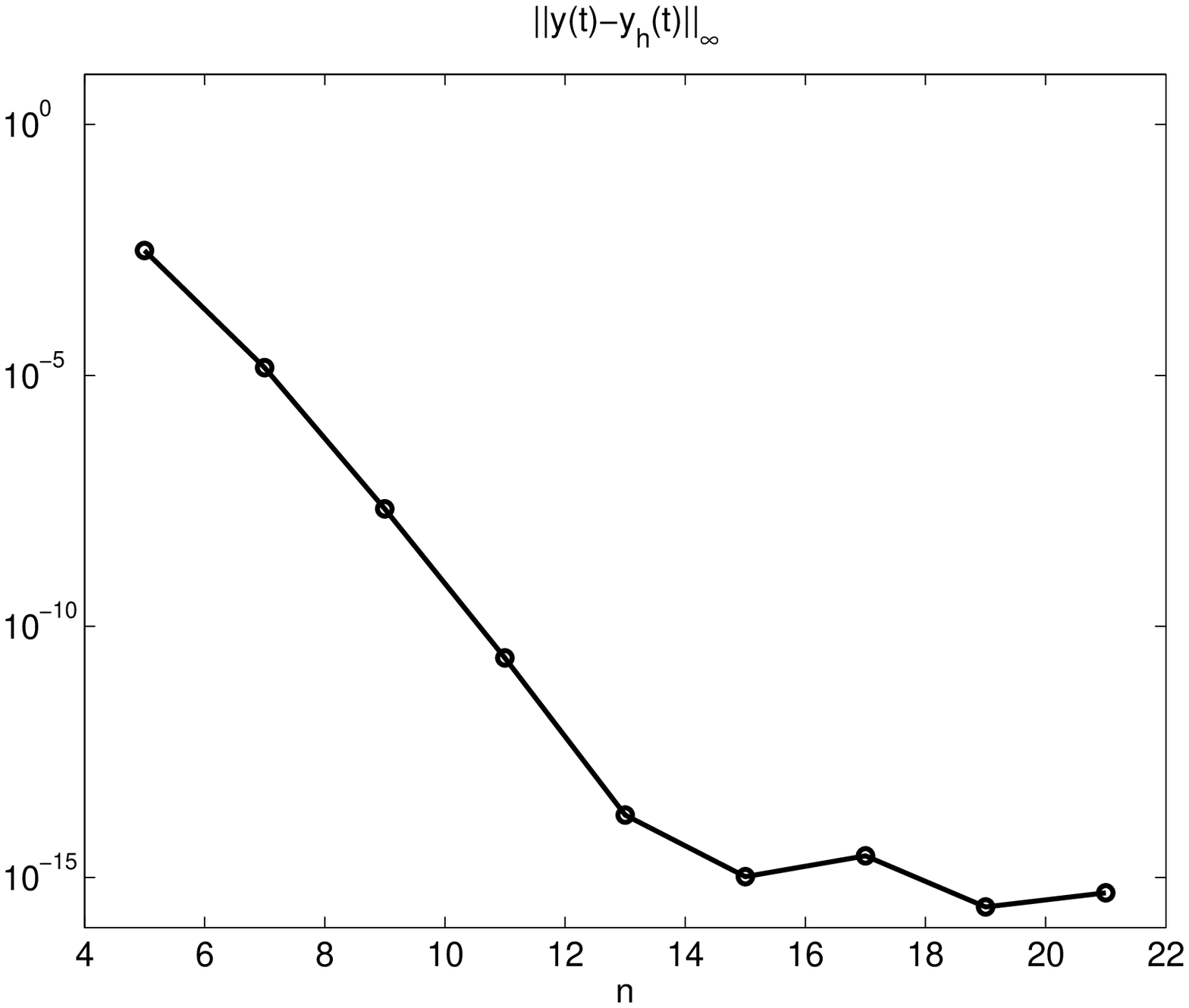}
     \end{center}
\caption{ Solution of the fourth order BVP. In the left figure we plot the exact and the approximate solutions. In the right figure we plot the computational error.}
    \label{test_02}
    \end{figure}
\end{example}
\begin{example}
We consider the following BVP
\[
y^{(5)} -  y = -15 e^x - 10 x e^x, \  y(0)=0,\ y'(0)=1,\ y''(0)=0, \  y(1) = 0,\ y'(1)=-e.
\]
The exact solution is
\[
 y(x) = x(x-1)e^x.
\]
The following Figure~\ref{test_02a} demonstrates the numerical solution of the $5^{th}$ order BVP. Here from the error computation we observe that the accuracy of the scheme is of  an exponential order.  Here we need $11$ nodes for a $10$ digit accurate solution.
\begin{figure}[ht!]
     \begin{center}
       \includegraphics[width=0.49\textwidth,height=6.5cm]{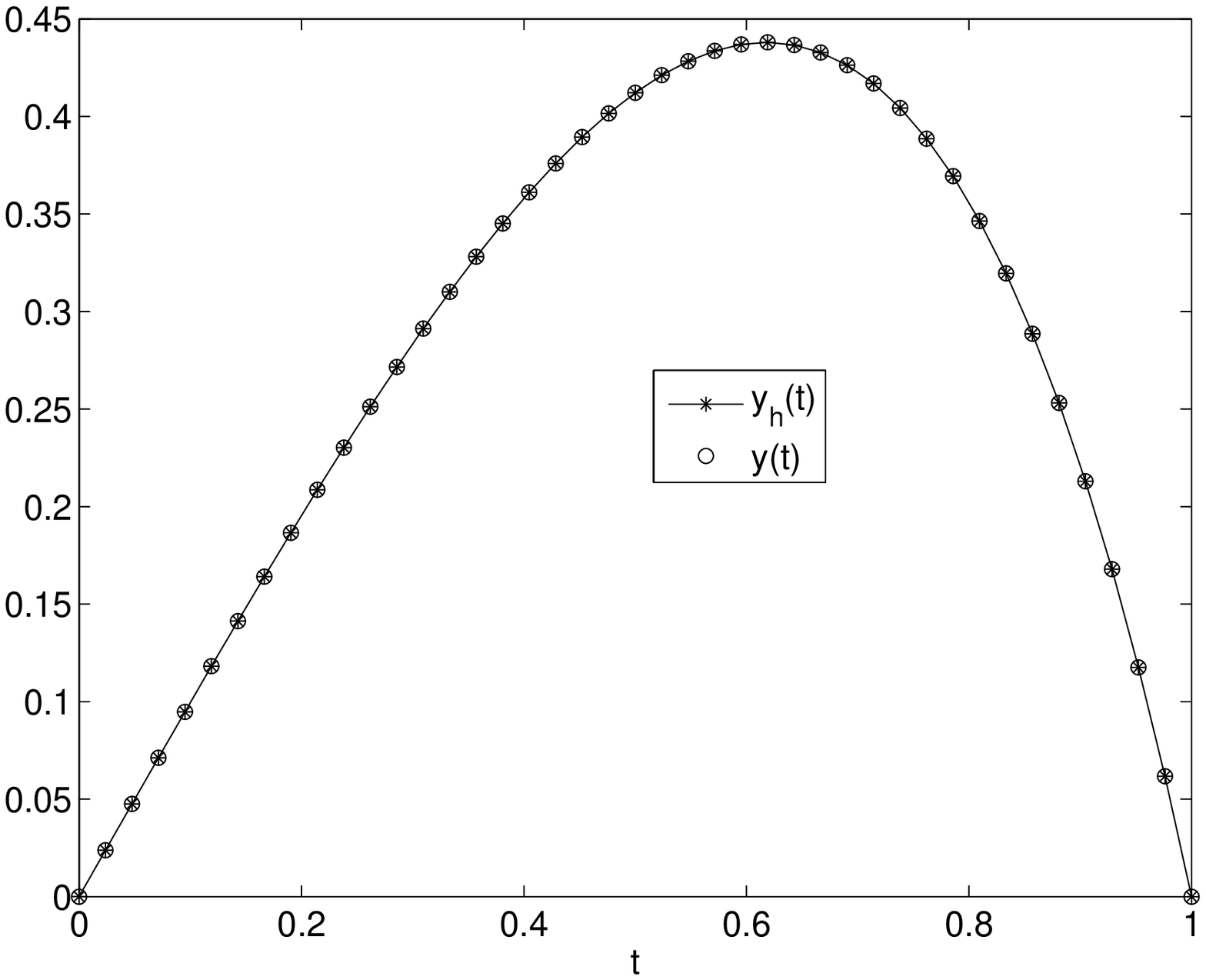}
       \includegraphics[width=0.49\textwidth,height=6.5cm]{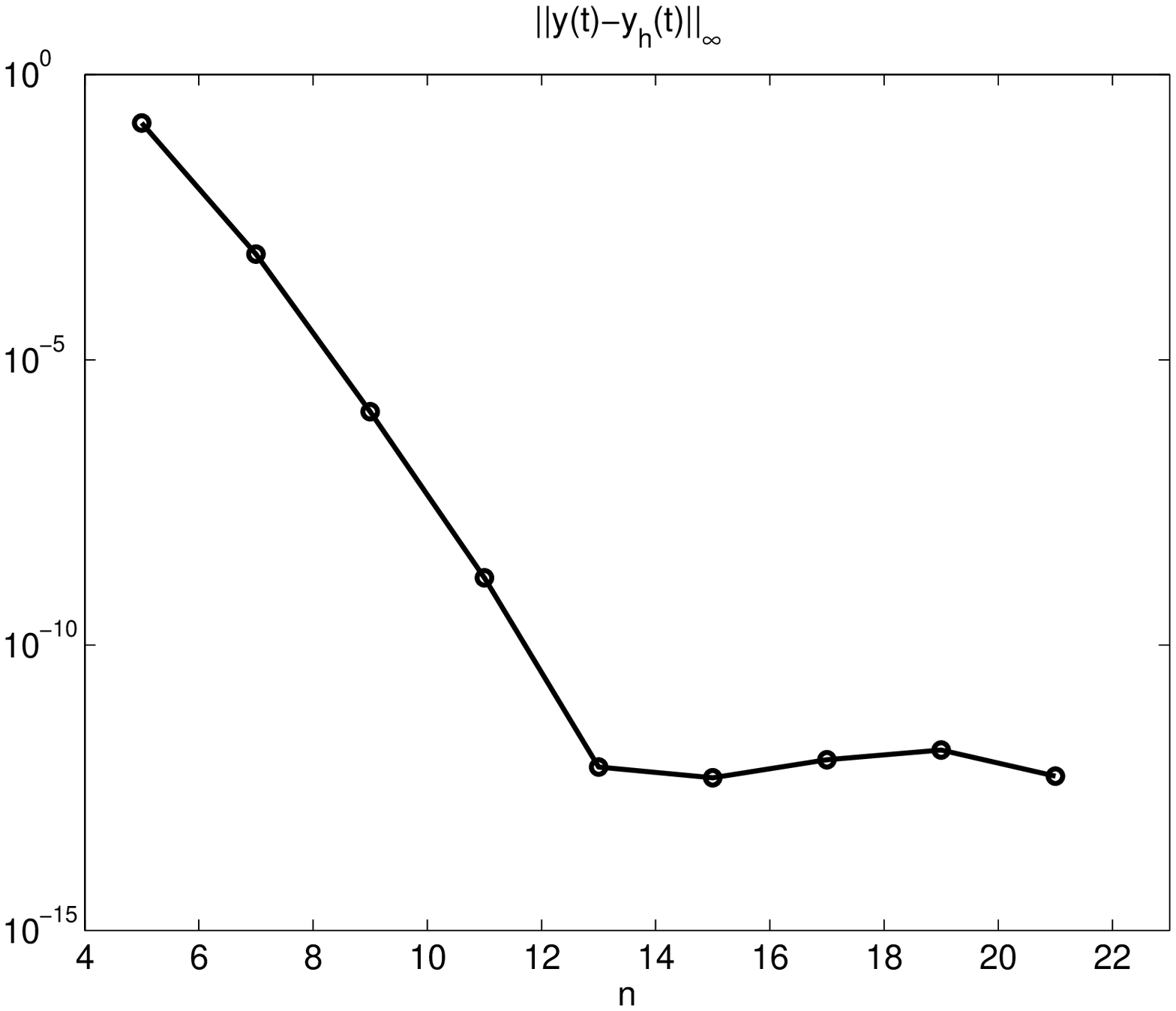}
     \end{center}
\caption{ Solution of the fifth order BVP. In the left figure we plot the exact and the approximate solutions. In the right figure we plot the computational error.}
    \label{test_02a}
    \end{figure}
\end{example}
\begin{example}
Consider the BVP
\[
     y^{(6)} -y = -6 e^x,
\]
with $y(0)=1$, $y'(0)=0$, $y^{(2)}(0)=-1$, $y(1)=0$, $y'(1)=-e$, $y^{(2)}(1)=-2e$.
The exact solution is
\[
  y(x)=(1-x)e^x.
\]
The following Figure~\ref{test_03} demonstrates the numerical solution of the $6^{th}$ order BVP. Here from the error computation we observe that the accuracy of the scheme is of
an exponential order and  $11$ nodes are enough for a $10$ digit accurate solution.
\begin{figure}[ht!]
     \begin{center}
       \includegraphics[width=0.49\textwidth,height=6.5cm]{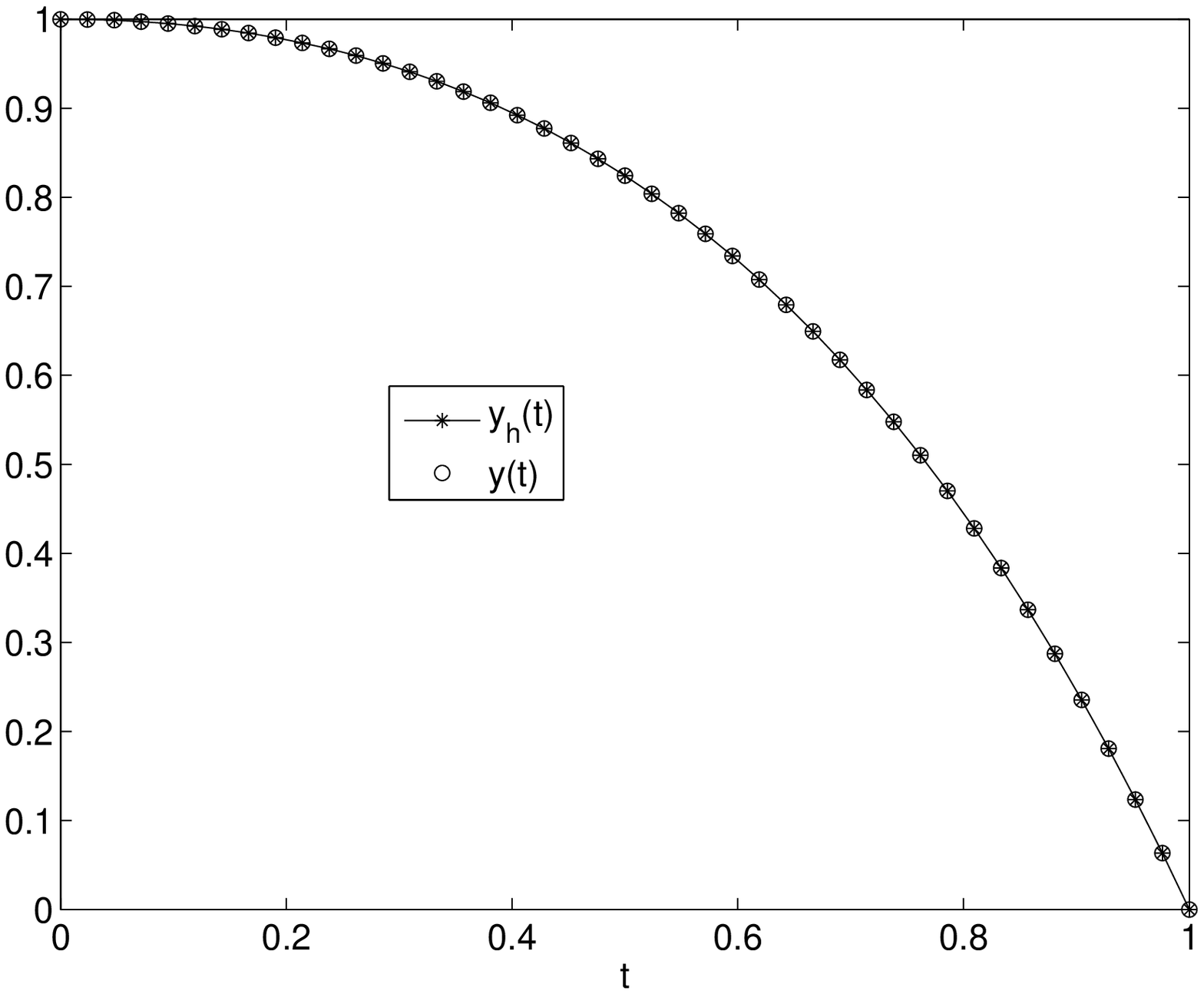}
       \includegraphics[width=0.49\textwidth,height=6.5cm]{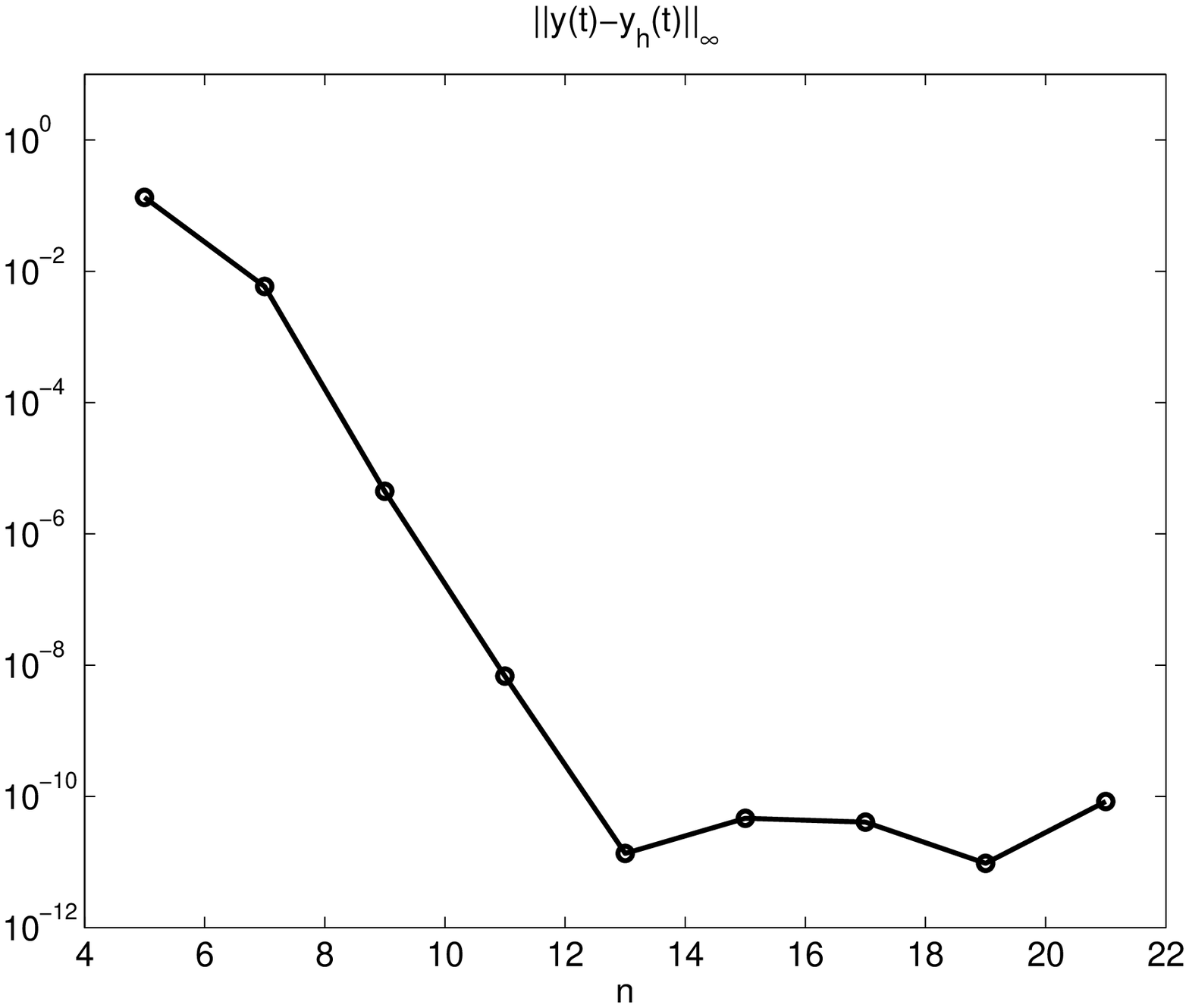}
     \end{center}
\caption{ Solution of the sixth order BVP. In the left figure we plot the exact and the approximate solutions. In the right figure we plot the computational error.}
    \label{test_03}
    \end{figure}
\end{example}
\begin{example}
Consider the BVP
\[
     y^{(9)} -y = -9 e^t,
\]
with $y(0)=1$, $y'(0)=0$, $y^{(2)}(0)=-1$,$y^{(3)}(0)=-2$,$y^{(4)}(0)=-3$, $y(1)=0$, $y'(1)=-e$, $y^{(2)}(1)=-2e$, $y^{(3)}(1)=-3e$.
The exact solution is
\[
  y(t)=(1-x)e^t.
\]
The following Figure~\ref{test_04} demonstrates the numerical solution of the $9^{th}$ order BVP. Here from the error computation we observe that the accuracy of the scheme is of
an exponential order. Here we see that we need $13$ nodes for a $8$ digit accurate solution.
\begin{figure}[ht!]
     \begin{center}
       \includegraphics[width=0.49\textwidth,height=6.5cm]{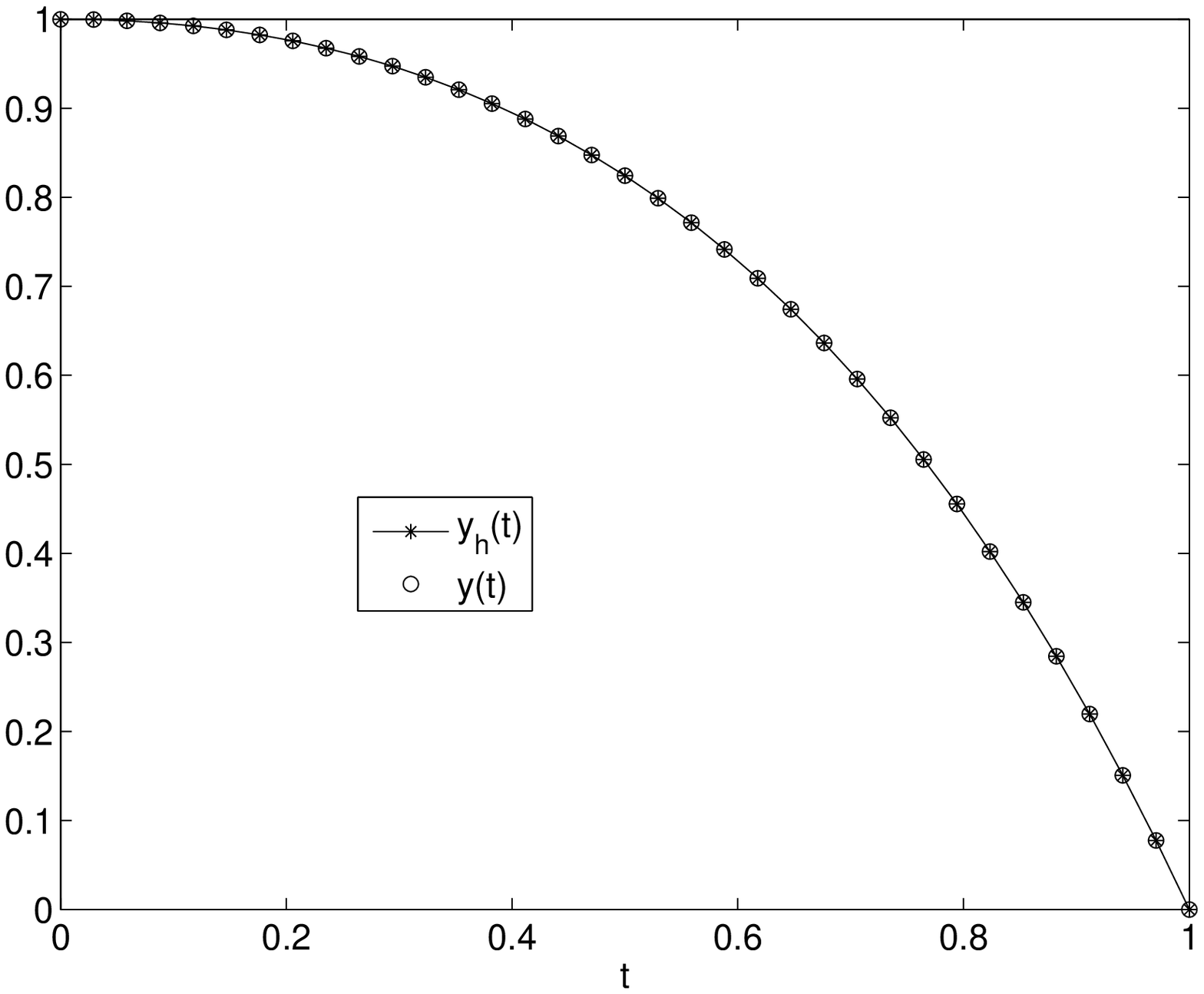}
       \includegraphics[width=0.49\textwidth,height=6.5cm]{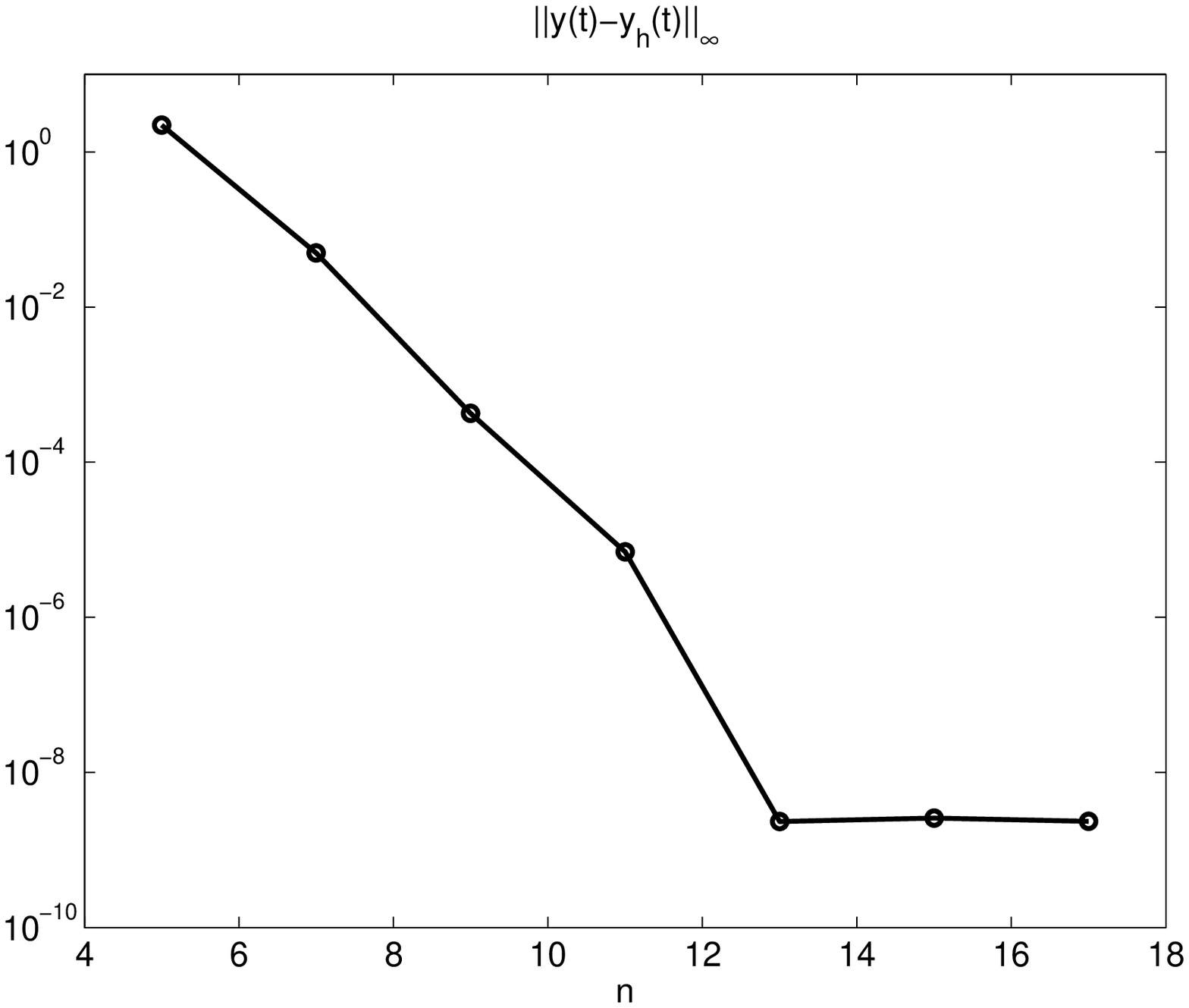}
     \end{center}
\caption{Solution of the ninth order BVP. In the left figure we plot the exact and the approximate solutions. In the right figure we plot the computational error.}
    \label{test_04}
    \end{figure}
\end{example}

%From the examples we notice that a $8$ to $10$ digit accuracy can be obtained by using $12-13$ Tchebychev nodes only.
%
We discuss in detail the formulations of a spectral collocation method for a general $m^{th}$  order BVPs using Tchebychev polynomials as  basis functions. From these computations we notice that the result is minimum $8-15$ digit accurate when $n>10$ which agree with other published works available in the literature.  Here the advantage of using this scheme is that we can handle boundary conditions simply and efficiently.  It preserves spectral accuracy as one expects from polynomial approximations at the Tchebychev notes.
Therefore, we conclude that the Tchebychev polynomial approach for the transformed system of first order BVPs produce much accurate results than all other previously published lower order works. From these computations we see that our method compete very well with other methods cited in this article.

There are some benefits of using the transformed scheme for \eqref{f:eq001}. Here we do not need to compute higher order derivatives of the unknown function. In this presented scheme, by converting the higher order differential equation \eqref{f:eq001} to the first order system of differential equations \eqref{f:eq002aa} we avoid computing Tchebychev differentiation matrices (what one needs while using Tchebychev polynomials to approximate higher order derivatives  for a direct scheme for \eqref{f:eq001}\cite{Trefethen}).  All boundary conditions $y^{(i)}(a)=\alpha_i$ and $y^{(i)}(b)=\beta_i$ have been converted to boundary conditions $y_i(a)=\alpha_i$, and $y_i(b) = \beta_i$ respectively, which are  simple to handle in our proposed set up.
Whereas applying $y^{(i)}(a)$ and $y^{(i)}(b)$ in the Tchebychev differentiation matrices is really complicated (see \cite{Trefethen, WeidemanR2000} for the exact detail). Thus the scheme we present here is simply efficient for solving the higher order BVP \eqref{f:eq001}, and preserves spectral accuracy.
\bibliography{ref}{}
\bibliographystyle{plain}
\end{document}